\newcommand{\mf}{\mathfrak}
\newcommand{\msc}{\mathscr}
\newcommand{\IZ}{\mathbb{Z}}
\newcommand{\IQ}{\mathbb{Q}}
\newcommand{\IC}{\mathbb{C}}
\newcommand{\ga}{\alpha}
\newcommand{\gb}{\beta}
\newcommand{\bP}{\mathbf{P}}
\newcommand{\ssq}{\subseteq}
\newcommand{\Ber}{\mathsf{Ber}}
\newcommand{\Di}{\,\textnormal{d}}
\newcommand{\bdry}{\partial}
\newcommand{\rt}{\textnormal{root}}
\newtheorem{lemma}{Lemma}
\newtheorem{prop}[lemma]{Proposition}
\newtheorem{thm}[lemma]{Theorem}
\newtheorem{conj}[lemma]{Conjecture}
\theoremstyle{remark}
\newtheorem{rem}[lemma]{Remark}
\theoremstyle{definition}
\newtheorem{defn}[lemma]{Definition}
\title[Polynomial invariants for rooted trees]{Polynomial invariants for rooted trees related to their random destruction}
\author{Fabian Burghart}
\address{Department of Mathematics and Computer Science, Eindhoven University of Technology, 5612AE Eindhoven, The Netherlands}
\email{f.burghart@tue.nl}
\date{05 October 2024}
\keywords{Graph polynomial; rooted trees; tree invariants; random cutting model}
\subjclass[2020]{05C31 (Primary); 05C05, 60C05 (Secondary)} 
\begin{document}

\begin{abstract}
 We consider three bivariate polynomial invariants $P$, $A$, and $S$ for rooted trees, as well as a trivariate polynomial invariant $M$. These invariants are motivated by random destruction processes such as the random cutting model or site percolation on rooted trees. We exhibit recursion formulas for the invariants and identities relating $P$, $S$, and $M$. The main result states that the invariants $P$ and $S$ are complete, that is they distinguish rooted trees (in fact, even rooted forests) up to isomorphism. The proof method relies on the obtained recursion formulas and on irreducibility of the polynomials in suitable unique factorization domains. For $A$, we provide counterexamples showing that it is not complete, although that question remains open for the trivariate invariant $M$. 
\end{abstract}

\maketitle

\section{Introduction and preliminaries}

The study of polynomial invariants in graph theory is of considerable tradition, with perhaps the best-known invariant being the Tutte polynomial \cite{Tutte47,Tutte54}. For trees on $n$ vertices, it is well-known that the Tutte polynomial evaluates to $x^{n-1}$ and is thus of little use when investigating trees. To overcome this issue, Chaudhary, Gordon and McMahon in \cite{GM89} and \cite{CG91Tutte} defined specific Tutte polynomials for (rooted) trees by replacing the usual rank of a subgraph in the corank-nullity definition of the Tutte polynomial by different notions of tree rank. In these papers, several of the obtained (modified) Tutte polynomials introduced for rooted trees were shown to be \emph{complete} invariants, that is, no two non-isomorphic rooted trees are assigned the same polynomial. 

Since then, more complete polynomial invariants for rooted trees were found, such as polychromatic polynomials \cite{BR2000} and the rooted multivariable chromatic polynomial \cite{LW22} -- both invariants require a large number of variables. The bivariate Ising polynomial \cite{AK09} and the Negami polynomial \cite{NO96}, originally defined for unrooted trees, were later shown to have versions for rooted trees that are complete invariants, see \cite{law2011}. More recently, Liu \cite{Liu21tree} found a complete bivariate polynomial as a generating function for a certain class of subtrees, and \cite{RW22polynomial} considers an extension of Liu's polynomial to three variables. 

In this paper, we define several polynomial invariants for rooted trees that are defined combinatorially, but can be motivated by two models for the random destruction of trees, namely Bernoulli site percolation and the random cutting model. Among these polynomials, two bivariate invariants are proven to be complete using an approach via irreducibility of polynomials and a suitable recursion, and for two more invariants examples are provided showing that they are not complete. These results suggest in a non-rigorous way that complete knowledge about the behaviour of a tree under random destruction should uniquely determine the tree, but it is still open if this holds rigorously (see e.g. the discussion below Conjecture~\ref{conj}). However, all polynomial invariants considered here are closely related, leading to several identities that might be interesting in their own right, or for the purpose of explicit computations relating to phenomena around the random cutting model or site percolation, like the recursions in Lemmas~\ref{lemma:polrec},~\ref{lemma:SArec},~\ref{lemma:Mrec}.

\subsection*{Structure of the paper}
After fixing the necessary notation and terminology concerning trees below, Section~\ref{sec:defs} is dedicated to the combinatorial definitions of our polynomial invariants. Section~\ref{sec:prob} delivers the probabilistic background on random destruction of trees, and may serve as a motivation for the polynomial invariants, but the material presented there is not necessary for the main results or the proofs thereof in earlier sections. Accordingly, a reader not interested in the relation between random tree destruction and the polynomial invariants may safely skip this section. Section~\ref{sec:ident} returns to the combinatorial setting, and features several technical results like recursion identities for all polynomials. In Section~\ref{sec:leaf-induced} we formulate and prove the main theorem of the paper, Theorem~\ref{thm:complete}, and employ it to derive a reconstruction result for leaf-induced subtrees. Finally, Section~\ref{sec:discussion} contains several remarks, examples, and an open conjecture.

\subsection*{Preliminaries}
For the purpose of this paper, a \emph{rooted tree} $T$ is a finite tree with one distinguished vertex, called the root of $T$. It will be convenient to also consider \emph{rooted forests}, by which we understand a finite (but possibly empty) disjoint union of rooted trees. By this convention, every component in a rooted forest is a rooted tree. A vertex is a \emph{leaf} of a rooted forest if it does not have any children (thus an isolated vertex is simultaneously a root and a leaf). 

An isomorphism of rooted forests is a graph isomorphism that additionally maps roots to roots. 

Given a rooted tree $T$, denote by $r$ the number of children of the root node $v_0$. We can construct a rooted forest from $T$ by removing $v_0$, thus creating a forest with $r$ components, and declaring the unique child of $v_0$ in each component to be the root node in that component. We will denote the resulting forest by $T-v_0$. The components of $T-v_0$ are also called the \emph{branches} of $T$. 

Conversely, given a rooted forest $F$ with $r\geq0$ components, let $v_0$ be a vertex not in $F$ and draw an edge from $v_0$ to each of the $r$ roots in $F$. Upon declaring $v_0$ to be the root of the so-constructed tree, we have obtained a rooted tree. We will denote the resulting tree by $\wedge(F)$ or $\wedge(T_1,\dots,T_r)$ if $F$ is given by its components $T_1,\dots,T_r$. 

Since our definition allows for empty rooted forests (containing no vertices whatsoever), it follows immediately that $\wedge(F)-v_0\cong F$ and $\wedge(T-v_0)\cong T$ for all rooted forests $F$ and all rooted trees $T$. In particular, removing the root of a tree and adding a joint root to a forest are inverse bijections between isomorphism classes of rooted trees and isomorphism classes of rooted forests. 

For convenience, $\bullet$ will denote the rooted tree on one vertex.

\section{Setting the stage: Defining polynomials}\label{sec:defs}

\subsection*{Leaf-induced subforests} 
Let $F$ be a rooted forest. By a \emph{leaf-induced subforest} $F'$ we understand a rooted forest $F'$ that is a (possibly empty) union of paths connecting roots of $F$ to leaves of $F$. In other words, any leaf of $F'$ must also be a leaf of $F$. It follows that $F'$ is completely determined by choosing a subset of the leaves of $F$, and connecting each of the chosen leaves to the root of its component. In particular, if $F$ has $\ell$ leaves, then it has $2^\ell$ leaf-induced subforests. 

\begin{defn}\label{defn:pol}
 For a rooted forest $F$, denote by $P_F(x,y)$ the bivariate generating function for leaf-induced subforests of $F$ according to their number of vertices and leaves. That is,
 \begin{equation}\label{eq:Pdef}
  P_F(x,y) = \sum_{F'\ssq F\ \text{leaf-induced}} x^{|V(F')|} y^{|L(F')|},
 \end{equation}
 where $V(F')$ and $L(F')$ denote the sets of vertices and leaves of $F'$, respectively. 
\end{defn}

As an example, if $T$ is the path on $n$ vertices with the root situated on one end, then $P_T(x,y)=1+x^ny$, since the only leaf-induced subforests are the empty one and $T$ itself. For $T$ being the star on $n+1$ vertices, with the root being the central vertex, we have $P_T(x,y)=1+\sum_{k=1}^n \binom{n}{k} x^{k+1}y^k$, which can be seen directly from a combinatorial argument, or computed recursively as will be established in the next section. 

For a rooted tree $T$, it will also be useful to introduce the shorthand notation 
\begin{equation}\label{eq:pdef}
 p_T(q):=1-P_T(q,-1)
\end{equation}
for the univariate generating function of non-empty leaf-induced subtrees with a sign according to the parity of the number of leaves. 

It should be mentioned that Razanajatovo Misanantenaina and Wagner, in \cite{RW22polynomial}, considered a trivariate polynomial invariant $\mathcal{P}_T(x,y,z)$ defined recursively by $\mathcal{P}_\bullet(x,y,z)=x$ and 
\[
 \mathcal{P}_T(x,y,z) = yz^{|T|-1} + \prod_{i=1}^r \mathcal{P}_{T_i}(x,y,z)
\]
for a tree $T=\wedge(T_1,\dots,T_r)$. Their Propositions 2.16 and 2.17 and the comment thereafter establish a connection between $\mathcal{P}_T$ and $P_T$, given by 
\[
 P_T(x,y)=x^{|T|}\mathcal{P}\left(y+\frac{1}{x},\frac{1}{x}-1,\frac{1}{x}\right).
\]

We also mention that $p_T(q)$ was previously investigated in \cite{devroye2011,2013transversals} in the context of transversals in trees, where a is a set of vertices intersecting all paths from the root to the leaves.

\subsection*{Admissible subtrees} By a \emph{subtree} of a rooted tree $T$ we mean either the empty subgraph of $T$ or any connected subgraph of $T$ that contains the root (though we will break with this convention in the context of fringe subtrees which generally do not contain the root node of $T$, see the paragraph above Definition~\ref{defn:polM}). Since a subtree $T'$ of $T$ is uniquely determined by its vertex set, we will not distinguish between $T'$ and its vertex set.  

We say that a subtree $T'$ is \emph{admissible} if and only if it is empty, or if $T'$ contains the root of $T$ but none of the leaves of $T$. We write $\msc A(T)$ for the set of all admissible subtrees of $T$. 

Given a set $S$ of vertices in a rooted tree $T$, we denote by $\partial S$ the \emph{boundary} of $S$, i.e. the set of all vertices that are adjacent to $S$ but not themselves in $S$. For our purposes, it is convenient to define $\partial\emptyset=\{\text{root}\}$.

\begin{defn}\label{defn:polA}
 For a rooted tree $T$, denote by $S_T(x,y)$ (resp. $A_T(x,y)$) the bivariate generating function for subtrees (resp. admissible subtrees) of $T$ according to their number of vertices and boundary vertices. That is,
 \begin{equation}\label{eq:Sdef}
  S_T(x,y)=\sum_{T'\ssq T} x^{|T'|} y^{|\bdry T'|}
 \end{equation}
 and 
 \begin{equation}\label{eq:Adef}
  A_T(x,y)=\sum_{T'\in\msc A(T)} x^{|T'|} y^{|\bdry T'|}. 
 \end{equation}
 If $F$ is a rooted forest having components $T_1,\dots,T_r$, then define
 \begin{equation}\label{eq:def_extension}
  S_F(x,y):=\prod_{i=1}^r S_{T_i}(x,y) \qquad \text{ and }\qquad A_F(x,y):=\prod_{i=1}^r A_{T_i}(x,y).
 \end{equation}
\end{defn}

For example, if $T$ is the path on $n$ vertices, again with the root located at one of the endpoints, then any shorter path starting at the root is a non-empty admissible subtree, and thus $A_T(x,y)=y(1+x+\dots+x^{n-1})$. Additionally, the entire path itself is the only non-admissible subtree (with $n$ vertices and empty boundary), so $S_T(x,y)=A_T(x,y)+x^n$. On the other hand, for $T$ being the centrally-rooted star on $n+1$ vertices, we have only two admissible subtrees and obtain $A_T(x,y)=y+xy^n$, but $S_T(x,y)=y+x(x+y)^n$.

\subsection*{The graph at separation}

The \emph{fringe subtree $T_v$} of a rooted tree $T$ is the induced subgraph of $T$ consisting of the vertex $v$ (which is designated the root of $T_v$) and all descendants of $v$. The following definition can be thought of as a weighted version of $A_T$, where each monomial summand stemming from an admissible subtree $T'$ gets a weight depending on the fringe subtrees rooted at $\bdry T'$. The particular choice of the weighing stems from the probabilistic interpretation of this polynomial, which will be elaborated upon in Section~\ref{sec:prob} below, and in particular from equation~\eqref{eq:pgfTS}.

\begin{defn}\label{defn:polM}
 For a rooted tree $T$, denote by $M_T(x,y,z)$ the trivariate polynomial defined by
 \begin{equation}\label{eq:Mdef}
  M_T(x,y,z)=\sum_{T'\in\msc A(T)} x^{|T'|}y^{|\bdry T'|-1} \sum_{v\in\bdry T'}\frac{1}{z}p_v(z),
 \end{equation}
 where $p_v(z):=p_{T_v}(z)=1-P_{T_v}(z,-1)$. 
\end{defn}

It follows from either Lemma~\ref{lemma:properties} below or from the probabilistic interpretation of $p_v$ that $p_v(0)=0$, so $\frac{1}{z}p_v(z)$ is indeed a polynomial in $z$.

\section{The probabilistic viewpoint: Random destruction of trees}\label{sec:prob}

We use this section to explain how the polynomials introduced in Section~\ref{sec:defs} relate to, and are inspired by, probabilistic considerations. 

\subsection*{Random destruction of trees}
Two popular models for randomly destroying graphs are percolation and the cutting model. We use this section to give a very brief introduction to key notions for both of these models, in order to provide a probabilistic motivation for studying the polynomial invariants of this paper in the section below. 

In $\Ber(q)$-site percolation, a probability $q\in[0,1]$ is fixed, and every vertex in a fixed underlying graph is deleted with probability $1-q$ and otherwise kept, independently from all other vertices. The connected components of the induced subgraph of all the vertices that are being kept are called clusters. Bernoulli site percolation can be seen as a continuous-time process in $q\in[0,1]$, by virtue of the following coupling: Equip every vertex $v$ with an independent random variable $X_v$ having the uniform distribution on $[0,1]$. At time $q$, a vertex $v$ is deleted if and only if $X_v>q$, and otherwise kept. It follows immediately that through this coupling, we may assume that $\Ber(q)$-site percolation produces a subgraph of $\Ber(q')$-site percolation whenever $q<q'$. Percolation has been extensively studied, and we refer to \cite{grimmett1999percolation} as a general reference. 

In the cutting model on a rooted tree $T$, vertices are deleted (i.e. cut) randomly one at a time, and all components not containing the root node are immediately discarded. This process necessarily stops once the root node is cut. Equivalently, one can equip each vertex $v$ in $T$ with an independent alarm clock ringing at a uniformly random time $X_v\in[0,1]$, at which the vertex $v$ is cut. It is easy to see that this continuous-time cutting model, as $t$ increases from 0 to 1, is exactly the evolution of the cluster containing the root node in the coupling described above for $\Ber(1-t)$-site percolation. The cutting model has first been considered by Meir and Moon in \cite{MeirMoon1970}, but has received significant attention in the last two decades through works such as \cite{panholzer2006cutting,janson2006random,bertoin2012fires,addario2014cutting}, just to name a few. 

For the cutting model on rooted trees, we say that \emph{separation} occurs at the first time when the remaining tree does not contain any original leaf of $T$ anymore. The remaining tree at this point in time will be denoted by $T_{\mf S}$ (cf. \cite{Bur21modification}). Note that $T_{\mf S}$ does not depend on whether we are working in discrete or continuous time. The admissible subtrees introduced in Section~\ref{sec:defs} are precisely those subtrees $T'$ of $T$ such that $\bP[T_{\mf S}=T']>0$, where $\bP$ denotes the probability measure stemming from the random cutting model on $T$.

\subsection*{Interpretation of the polynomials}
Using the connection described above between percolation and continuous-time cuttings, we note that the probability that $\Ber(q)$-site percolation for $q\in[0,1]$ contains a path from the root to a leaf equals the probability that separation has not occurred by time $1-q$ in the continuous-time cutting model. By virtue of Propositions~6 and 7 in \cite{Bur21modification}, this probability is given by $p_T(q)$ which is a polynomial in $q$ whose coefficients are given as 
\[
 [q^k]p_T(q) = \sum_{\stackrel{T'\ssq T \text{ leaf-induced}}{|T'|=k}} (-1)^{|L(T')|+1}.
\]
The polynomial $P_T$ is then obtained through a bivariate extension, such that the second variable replaces the sign and we obtain a generating function as in Definition~\ref{defn:pol}. 

In the setting of $\Ber(q)$-site percolation on a rooted tree $T$, the term $q^{|T'|}(1-q)^{|\bdry T'|}$ gives the probability that a subgraph $T'$ of $T$ is the root cluster of the percolation. The restriction to admissible subgraphs in \eqref{eq:Adef} leads to connections between $A_T$ and the polynomials $p_T$ and $M_T$, see Lemma~\ref{lemma:MAp}, and is more relevant to the study of the random cutting model. While the change from the $S_T(q,1-q)$ to the bivariate invariant $S_T(x,y)$ (and analogously for $A_T$) might seem like an ad-hoc generalization, it has its motivation in enabling the recursions in Lemma~\ref{lemma:SArec}.

In the case where $S$ and $A$ are applied to rooted forests, defined in \eqref{eq:def_extension}, it is still possible to relate these polynomials to the random destruction of rooted forests in a matter analogous to the case of trees, but we will omit the details here. 

Assume that the continuous-time cutting model separates $T$ at some time $q_0\in[0,1]$, and leaves behind an admissible graph $T'$. Then immediately before separation, all but one of the vertices in $\bdry T'$ must have been cut already, with the exceptional vertex $v\in\bdry T'$ being such that there still is a path connecting the root to a leaf through $v$ present. Moreover, none of the vertices in $T'$ can have been cut before $q_0$. In particular, at time $q_0$, the fringe subtree $T_v$ has not yet been separated itself. Employing this idea, it is possible to show that 
\[
 \bP[T_{\mf S}=T']=\int_0^1 u^{|T'|-1}(1-u)^{|\bdry T'|-1}\sum_{v\in\bdry T'} p_v(u)\,\Di u
\]
(cf. Proposition~5 in \cite{Bur21modification}). From this, it follows immediately that the probability generating function of $|T_{\mf S}|$ is given by 
\begin{equation}\label{eq:pgfTS}
 \sum_{n\geq0} \bP[|T_{\mf S}|=n]x^n = \int_0^1 M_T(xu,1-u,u)\,\Di u
\end{equation}
It might therefore seem more useful to directly investigate the polynomial on the right-hand side of \eqref{eq:pgfTS}; however, a possible advantage of $M_T$ lies in the recursion \eqref{eq:Mrec}.

\section{Some identities}\label{sec:ident}

The purpose of this section is to exhibit recursion formulas for all relevant polynomials, as well as identities relating the polynomials to one another. The following first lemma will prove useful throughout:

\begin{lemma}\label{lemma:properties}
 Let $F$ be a rooted forest. Then:
 \begin{enumerate}[(a)]
  \item The number of vertices of $F$ equals $\deg_x(P_F)$.
  \item The number $\ell$ of leaves of $F$ equals $\deg_y(P_F)$.
  \item Specializing to $x=1$ gives $P_F(1,y)=(1+y)^\ell$. In particular, we have $P_F(1,-1)=0$ unless $F$ is the empty forest, in which case $P_F\equiv 1$. 
 \end{enumerate}
\end{lemma}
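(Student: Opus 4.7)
The plan is to exploit the bijection, implicit in the definition of leaf-induced subforest, between such subforests $F'\subseteq F$ and subsets $S\subseteq L(F)$ of the leaves: given $S$, one forms $F'$ by taking the union of the paths from each $v\in S$ to the root of the component of $F$ containing $v$. Under this bijection, $L(F')=S$, so the exponent of $y$ in the term for $F'$ is exactly $|S|$.

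For parts (a) and (b), I would observe that $F$ itself is a leaf-induced subforest (namely the one corresponding to $S=L(F)$), and the term it contributes to \eqref{eq:Pdef} is $x^{|V(F)|}y^{\ell}$. Since any leaf-induced subforest $F'$ is a subgraph of $F$, we have $|V(F')|\leq|V(F)|$ and $|L(F')|\leq\ell$, with the maximum attained only by $F'=F$; in particular the monomial $x^{|V(F)|}y^{\ell}$ appears with coefficient $1$ and strictly dominates every other monomial in both the $x$- and $y$-degree. This gives $\deg_x(P_F)=|V(F)|$ and $\deg_y(P_F)=\ell$.

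For part (c), setting $x=1$ collapses the sum over leaf-induced subforests to a sum over subsets $S\subseteq L(F)$:
\begin{equation*}
 P_F(1,y)=\sum_{S\subseteq L(F)} y^{|S|}=\sum_{k=0}^{\ell}\binom{\ell}{k}y^{k}=(1+y)^{\ell}.
\end{equation*}
Substituting $y=-1$ then yields $0^{\ell}$, which vanishes whenever $\ell\geq1$. To close the argument, I would note that by the definition of a leaf, any non-empty rooted forest contains at least one leaf (an isolated root counts as a leaf, and otherwise one reaches a leaf by descending), so $\ell=0$ forces $F$ to be empty; in that case the only leaf-induced subforest is the empty one, contributing the monomial $x^{0}y^{0}=1$, so $P_F\equiv1$.

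There is no substantive obstacle here: the only thing one has to be mildly careful about is the edge case in (c) where $F$ is the empty forest, which must be handled separately since the product/sum formalism gives $P_{\emptyset}\equiv1$ rather than $0$. Everything else is a direct unpacking of Definition~\ref{defn:pol} together with the leaf-set parametrisation of leaf-induced subforests.
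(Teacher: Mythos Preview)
Your proposal is correct and follows essentially the same approach as the paper: parts (a) and (b) are read off directly from the definition (you are just slightly more explicit in identifying $F'=F$ as the unique term of maximal bidegree), and part (c) uses the same bijection between leaf-induced subforests and subsets of $L(F)$ to compute $P_F(1,y)=\sum_{k}\binom{\ell}{k}y^k=(1+y)^\ell$. Your treatment of the empty-forest edge case is a welcome addition that the paper leaves implicit.
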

\begin{proof}
 Parts (a) and (b) are immediate from Definition \ref{defn:pol}. For part (c), note that $P_F(1,y)$ is the generating function for leaf-induced subforests with a given number of leaves. Since subsets of leaves are in bijection with leaf-induced subforests, we have $P_F(1,y)=\sum_{k=0}^\ell\binom{\ell}{k}y^k=(1+y)^\ell$.
\end{proof}

\begin{lemma}\label{lemma:polrec}
 We have $P_{\bullet}(x,y)=1+xy$ and $p_{\bullet}(x)=x$. Let $F$ be a non-empty rooted forest with rooted trees $T_1,\dots,T_r$ ($r\geq 1$) as its components. 
 \begin{enumerate}[(a)]
  \item We then have 
   \begin{equation}\label{eq:Pforest}
    P_F(x,y)=\prod_{i=1}^r P_{T_i}(x,y).
   \end{equation}
  \item For $T=\wedge(F)$, that is, for a tree having branches $T_1, \dots,T_r$, we have
   \begin{equation}\label{eq:Prec}
    P_T(x,y)=1-x+xP_F(x,y).
   \end{equation}
  \item As a consequence, 
   \begin{equation}\label{eq:prec}
    p_T(x) = x\left(1-\prod_{i=1}^r (1-p_{T_i}(x))\right). 
   \end{equation}
 \end{enumerate}
\end{lemma}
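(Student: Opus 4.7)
The plan is to handle the three parts in order, since each builds on the previous, and to dispose of the base cases first by direct enumeration. The single vertex $\bullet$ is simultaneously a root and a leaf, so it has exactly two leaf-induced subforests (the empty one and itself), yielding $P_\bullet(x,y)=1+xy$; then $p_\bullet(x)=1-P_\bullet(x,-1)=1-(1-x)=x$.

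For part (a), I would argue combinatorially that the leaf-induced subforests of $F$ are in bijection with tuples $(F_1',\dots,F_r')$ where each $F_i'$ is a leaf-induced subforest of $T_i$. Indeed, the leaf set $L(F)$ is the disjoint union of the $L(T_i)$, so choosing a subset of $L(F)$ amounts to independently choosing a subset of $L(T_i)$ in each component, and the connecting paths stay within the respective components. Vertex and leaf counts then add across components, so the generating function factorizes into the claimed product.

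The main content is part (b). I would classify the leaf-induced subforests $T'$ of $T=\wedge(T_1,\dots,T_r)$ by whether they are empty. The empty one contributes $1$. A non-empty $T'$ must contain the root $v_0$ of $T$, since every leaf of $T$ lies in some branch $T_i$ and its root-to-leaf path in $T$ passes through $v_0$. Hence any non-empty $T'$ is determined by an $r$-tuple $(F_1,\dots,F_r)$ of leaf-induced subforests of the branches, subject to the constraint that at least one $F_i$ is non-empty: the all-empty tuple would give $T'=\{v_0\}$, but $v_0$ is not a leaf of $T$ (since $r\ge 1$), so $\{v_0\}$ is not leaf-induced. Summing $x^{1+\sum_i |V(F_i)|}\,y^{\sum_i |L(F_i)|}$ over \emph{all} $r$-tuples and subtracting the single spurious contribution $x\cdot 1$ gives
\[
\sum_{\emptyset\ne T'\text{ leaf-induced}} x^{|V(T')|}y^{|L(T')|} \;=\; x\prod_{i=1}^r P_{T_i}(x,y) \;-\; x,
\]
and adding the empty-subforest term $1$ together with part (a) yields $P_T(x,y)=1-x+xP_F(x,y)$.

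Part (c) is then a mechanical consequence: substituting $y=-1$ into (b) gives $P_T(x,-1)=1-x+x\prod_i P_{T_i}(x,-1)=1-x+x\prod_i(1-p_{T_i}(x))$, and then $p_T(x)=1-P_T(x,-1)=x\bigl(1-\prod_i(1-p_{T_i}(x))\bigr)$. The only genuine subtlety, and the step I would be most careful about, is the bookkeeping in (b) involving the phantom subforest $\{v_0\}$; everything else is either direct enumeration or routine algebra.
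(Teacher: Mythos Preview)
Your proof is correct and follows essentially the same approach as the paper. The only cosmetic difference is in part (b): the paper phrases the key step directly as a bijection between non-empty leaf-induced subforests of $F$ and non-empty leaf-induced subforests of $\wedge(F)$ (via adding/removing the new root), whereas you route through the tuple decomposition from (a) and explicitly subtract the phantom $\{v_0\}$ term---but these are the same argument, and your extra care about why $\{v_0\}$ is not leaf-induced is a welcome clarification.
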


\begin{proof}
 For part (a), let $F'$ be any leaf-induced subforest of $F$. Then the intersections $F'\cap T_1,\dots,F'\cap T_r$ are (possibly empty) leaf-induced subtrees of $T_1,\dots,T_r$, respectively. In this way, we can identify $F'$ with the $r$-tuple $(F'\cap T_1,\dots,F'\cap T_r)$, and both the number of vertices and the number of leaves in these components add up to the respective numbers of $F'$. Thus the bivariate generating function $P_F$ equals the product $\prod_{i=1}^r P_{T_i}$.
 
 For part (b), observe that there is a bijection between non-empty leaf-induced subforests of $F$ and leaf-induced subtrees of $\wedge(F)$, simply by adding the root node of $\wedge(F)$ to the subforest of $F$. Since this increases the number of vertices by 1, the generating function for those subtrees is given by $x\left(P_F(x,y)-1\right)$. Accounting for the empty subforest of $\wedge(F)$ as well yields the result.
 
 Finally, part (c) follows from (a) and (b) after recalling the definition $p_T(x)=1-P_T(x,-1)$. 
\end{proof}

\begin{lemma}\label{lemma:SArec}
 We have $S_{\bullet}(x,y)=y+x$ and $A_{\bullet}(x,y)=y$. If $T$ is a rooted tree with branches $T_1,\dots,T_r$, then 
 \begin{equation}\label{eq:Srec}
  S_T(x,y) = y+x\prod_{i=1}^r S_{T_i}(x,y)
 \end{equation}
 and 
 \begin{equation}\label{eq:Arec}
  A_T(x,y) = y+x\prod_{i=1}^r A_{T_i}(x,y).
 \end{equation}
\end{lemma}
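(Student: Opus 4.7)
The plan is to verify the base case by direct inspection and then establish the recursions by setting up a bijection between non-empty (admissible) subtrees of $T$ containing the root and $r$-tuples of (possibly empty, possibly admissible) subtrees of the branches.

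For the base case $T=\bullet$, I would enumerate subtrees explicitly: the empty subtree contributes $x^0 y^{|\bdry \emptyset|} = y$ (by the convention $\bdry\emptyset=\{\text{root}\}$), and the whole tree $\{v_0\}$ contributes $x^1 y^0 = x$, giving $S_\bullet=y+x$. For $A_\bullet$, observe that $\{v_0\}$ contains the only leaf of $\bullet$ and so is not admissible, while $\emptyset$ is admissible by definition; hence $A_\bullet=y$.

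For the recursion, I would argue as follows. Let $T'\ssq T$ be a subtree. Either $T'=\emptyset$, contributing $y$ to $S_T$, or $T'$ contains the root $v_0$ of $T$. In the latter case, $T'\cap T_i$ is a (possibly empty) subgraph of $T_i$, and connectedness of $T'$ together with the fact that the only edges from $v_0$ go to the roots of the $T_i$ forces each non-empty $T'\cap T_i$ to be a subtree of $T_i$ containing the root of $T_i$. Conversely, any tuple $(T'_1,\dots,T'_r)$ with $T'_i\ssq T_i$ a subtree (possibly empty) gives rise, by adjoining $v_0$, to a unique subtree of $T$ containing $v_0$. So the non-empty subtrees of $T$ are in bijection with the $r$-tuples of (possibly empty) subtrees of the branches. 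Under this bijection, $|T'|=1+\sum_{i=1}^r|T'_i|$, which accounts for the factor $x$ in front of the product.

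The main bookkeeping step is to show that $|\bdry T'|=\sum_{i=1}^r|\bdry T'_i|$, where the boundaries on the right are taken inside $T_i$. For this I would split into two cases: if $T'_i\neq\emptyset$, then $\bdry T'_i\ssq T_i$ consists of descendants of the root of $T_i$ adjacent to $T'_i$, all of which lie in $\bdry T'$; if $T'_i=\emptyset$, the convention $\bdry\emptyset=\{\text{root of }T_i\}$ gives exactly the single vertex in $\bdry T'$ coming from the $i$-th branch, namely the root of $T_i$, which is adjacent to $v_0\in T'$ but not in $T'$. Conversely, every vertex of $\bdry T'$ either is some root of $T_i$ with $T'_i=\emptyset$, or lies in some $T_i$ adjacent to $T'_i$. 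Since $v_0\in T'$, the root is never in $\bdry T'$, and the tally is exact. Multiplying the contributions of the branches then yields $x\prod_{i=1}^r S_{T_i}(x,y)$, proving \eqref{eq:Srec}.

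For \eqref{eq:Arec}, the same bijection applies, so I only need to check that admissibility of $T'$ corresponds to admissibility (or emptiness) of each $T'_i$. Since $r\geq 1$, the root $v_0$ of $T$ is not a leaf, so the leaves of $T$ are exactly the (disjoint) union of leaves of the $T_i$. Thus $T'$ contains no leaf of $T$ if and only if each $T'_i$ contains no leaf of $T_i$, i.e. each $T'_i$ is either empty or admissible in $T_i$. Both cases are precisely what is counted by $A_{T_i}(x,y)$, so the same factorisation argument gives \eqref{eq:Arec}. I do not anticipate a genuine obstacle here; the only point that requires care is the boundary bookkeeping in the empty-branch case, which is handled cleanly by the convention $\bdry\emptyset=\{\text{root}\}$.
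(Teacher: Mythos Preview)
Your proof is correct and follows essentially the same approach as the paper: separate the empty subtree, set up the bijection between non-empty subtrees of $T$ and $r$-tuples of subtrees of the branches, and track vertex and boundary counts. You are more explicit than the paper about the boundary bookkeeping (the paper simply asserts $|\bdry T'|=\sum_i|\bdry T'_i|$), and your careful handling of the empty-branch case via the convention $\bdry\emptyset=\{\text{root}\}$ is exactly the right way to make that step rigorous.
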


\begin{proof}
 The claims for the tree on one vertex are easily verified from the definitions. 
 
 Consider a subtree $T'$ of $T$. Then $T'$ is either empty, or it consists of the root together with the parts belonging to individual branches, $T_i'=T'\cap T_i$, for $i=1,\dots,r$. In the non-empty case, $T'$ is uniquely determined by the $T_i'$, and we have $|T'|=1+\sum_i|T'_i|$ and $|\bdry T'|=\sum_i|\bdry T'_i|$. Thus,
 \begin{align*}
  S_T(x,y)&=\sum_{T'\ssq T} x^{|T'|}y^{|\bdry T'|} 
  =y + x \!\! \sum_{T'_1\ssq T_1} \!\! \cdots \!\! \sum_{T'_r\ssq T_r} \!\! x^{\sum_\ell |T'_\ell|} y^{\sum_\ell |\bdry T'_\ell|}\\
  &= y + x \prod_{i=1}^r \sum_{T'_i\ssq T_i} x^{|T'_i|}y^{|\bdry T'_i|}
  = y+x\prod_{i=1}^r S_{T_i}(x,y),
 \end{align*}
 which proves \eqref{eq:Srec}.
 
 Note that if $T'$ is an admissible subtree of $T$, then the corresponding $T'_i$ will be admissible subtrees of $T_i$, for each $i$. Conversely, any non-empty $T'$ is again uniquely determined by the $T'_i$. Hence, the computations for equation~\eqref{eq:Arec} are identical to the ones above. 
\end{proof}

\begin{lemma}\label{lemma:Mrec}
 We have $M_{\bullet}(x,y,z)=1$. If $T$ is a rooted tree with branches $T_1,\dots,T_r$ then 
 \begin{equation}\label{eq:Mrec}
  M_T(x,y,z)=\frac{1}{z}p_T(z) +x\sum_{i=1}^r M_{T_i}(x,y,z)\prod_{j\neq i}A_{T_j}(x,y)
 \end{equation}
\end{lemma}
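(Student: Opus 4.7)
My plan is to verify the base case by direct computation and then expand $M_T(x,y,z)$ for a non-trivial tree by splitting the sum over $\msc A(T)$ into the contribution from the empty admissible subtree and the contribution from non-empty ones.

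For the base case $T=\bullet$, the only admissible subtree is the empty one; by the convention $\bdry\emptyset=\{\rt\}$ we get $|T'|=0$, $|\bdry T'|-1=0$, and the remaining sum is $\tfrac{1}{z}p_\bullet(z)=\tfrac{1}{z}\cdot z=1$, confirming $M_\bullet=1$.

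For $T=\wedge(T_1,\dots,T_r)$ with root $v_0$, the empty subtree $T'=\emptyset$ has $\bdry T'=\{v_0\}$ and contributes exactly $\tfrac{1}{z}p_{T_{v_0}}(z)=\tfrac{1}{z}p_T(z)$, yielding the first summand on the right-hand side of \eqref{eq:Mrec}. For any non-empty admissible $T'\in\msc A(T)$, the root $v_0$ lies in $T'$ and the intersections $T'_i:=T'\cap T_i$ are admissible subtrees of $T_i$ (possibly empty; emptiness is forced when $T_i=\bullet$). One then checks the bookkeeping identities
\begin{equation*}
|T'|=1+\sum_{i=1}^r|T'_i|, \qquad \bdry T'=\bigsqcup_{i=1}^r \bdry_{T_i}T'_i,
\end{equation*}
where for each $i$ the boundary is taken \emph{inside} $T_i$ using the convention $\bdry_{T_i}\emptyset=\{\text{root of }T_i\}$; when $T'_i=\emptyset$ that root is indeed adjacent to $v_0\in T'$ and thus a genuine element of $\bdry T'$. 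Moreover, since the fringe subtree $T_v$ for $v\in T_i$ is the same whether computed in $T$ or in $T_i$, the polynomial $p_v(z)$ is intrinsic, and $\sum_{v\in\bdry T'}\tfrac{1}{z}p_v(z)=\sum_{i=1}^r\sum_{v\in\bdry_{T_i}T'_i}\tfrac{1}{z}p_v(z)$.

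Substituting these identities in the definition of $M_T$ and interchanging the sum over $i$ with the product over branches yields
\begin{equation*}
x\sum_{i=1}^r\Biggl(\sum_{T'_i\in\msc A(T_i)}x^{|T'_i|}y^{|\bdry_{T_i}T'_i|-1}\!\!\sum_{v\in\bdry_{T_i}T'_i}\!\!\tfrac{1}{z}p_v(z)\Biggr)\prod_{j\neq i}\Biggl(\sum_{T'_j\in\msc A(T_j)}x^{|T'_j|}y^{|\bdry_{T_j}T'_j|}\Biggr),
\end{equation*}
in which the $i$-th bracketed factor is $M_{T_i}(x,y,z)$ and each remaining factor is $A_{T_j}(x,y)$ (the convention $\bdry\emptyset=\{\rt\}$ ensures the empty-subtree term contributes $y$ to $A_{T_j}$, matching Definition~\ref{defn:polA}). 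Combined with the empty-subtree contribution this is exactly \eqref{eq:Mrec}.

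The main subtlety, and the step that deserves care in the write-up, is tracking the off-by-one in the exponent of $y$: the global factor $y^{|\bdry T'|-1}$ must be distributed so that precisely one branch absorbs the $-1$ (becoming the $M_{T_i}$ factor, which has exponent $|\bdry T'|-1$ by definition) while the others yield $A_{T_j}$. The decomposition makes this distribution natural because the boundary-vertex sum already selects the distinguished index $i$; everything else is straightforward factorization and use of the convention for $\bdry\emptyset$.
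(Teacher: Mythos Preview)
Your proof is correct and follows essentially the same approach as the paper's: split the sum over $\msc A(T)$ into the empty subtree (giving $\tfrac{1}{z}p_T(z)$) and the non-empty ones, decompose the latter branchwise via $T'_i=T'\cap T_i$, and factor using the additivity of $|T'|$ and $|\bdry T'|$. The paper carries out the factorization by explicitly pulling out $\tfrac{x}{yz}$ and then regrouping, whereas you explain in words how the $-1$ in the $y$-exponent is absorbed by the distinguished branch; these are just two presentations of the same computation.
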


\begin{proof}
 We use the same approach and notation as in the proof of Lemma~\ref{lemma:SArec}. So, any $T'\in\msc A(T)$ is either empty, or contains the root together with parts $T_i'\in\msc A(T_i)$ for each branch $T_1,\dots,T_r$. Thus, we obtain
 \begin{align*}
  M&_T(x,y,z) 
   = \sum_{T'\in \msc A(T)} x^{|T'|} y^{|\bdry T'|-1} \sum_{v\in\bdry T'}\frac{p_v(z)}{z} \\
  &= \frac{p_T(z)}{z} +\!\! \sum_{T'_1\in\msc A(T_1)}\!\! \cdots \!\! \sum_{T'_r\in\msc A(T_r)} \!\! x^{1+\sum_\ell |T'_\ell|} y^{\sum_\ell \bdry |T'_\ell|-1} \! \sum_{v\in\bigcup_i \bdry T'_i} \frac{p_v(z)}{z}\\
  &= \frac{p_T(z)}{z} + \frac{x}{yz} \sum_{i=1}^r \sum_{T'_1\in\msc A(T_1)}\!\! \cdots \!\! \sum_{T'_r\in\msc A(T_r)} \!\! x^{\sum_\ell |T'_\ell|} y^{\sum_\ell \bdry |T'_\ell|} \sum_{v\in\bdry T'_i}p_v(z)\\
  &= \frac{p_T(z)}{z} + \frac{x}{yz} \sum_{i=1}^r \left(\prod_{j\neq i} \sum_{T'_j\in\msc A(T_j)} \!\!\! x^{|T'_j|}y^{|\bdry T'_j|}\right) \!\! \sum_{T'_i\in\msc A(T_i)}\!\!\! x^{|T'_i|}y^{|\bdry T'_i|} \sum_{v\in\bdry T'_i}p_v(z).
 \end{align*}
 By comparing the final expression to Definitions~\ref{defn:polA} and \ref{defn:polM}, we obtain \eqref{eq:Mrec}. 
\end{proof}

\begin{lemma}\label{lemma:MAp}
 For any rooted tree $T$, we have the following three identities:
 \begin{equation}\label{eq:MderivA}
  M_T(x,y,1)=\frac{\partial}{\partial y} A_T(x,y)
 \end{equation}
 \begin{equation}\label{eq:Aandp}
  A_T(x,1-x) = 1-p_T(x)
 \end{equation}
 \begin{equation}\label{eq:Mderivp}
  M_T(x,1-x,x)=\frac{\Di}{\Di x} p_T(x).
 \end{equation}
\end{lemma}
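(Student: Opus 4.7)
The plan is to prove the three identities in the order presented, with part~\eqref{eq:Mderivp} building on the earlier parts.

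For \eqref{eq:MderivA}, I would simply specialize $M_T(x,y,z)$ at $z=1$. The key observation is that $p_v(1)=1$ for every vertex $v$: indeed $p_v(1)=1-P_{T_v}(1,-1)$, and Lemma~\ref{lemma:properties}(c) gives $P_{T_v}(1,-1)=0$ since the fringe subtree $T_v$ is non-empty. The inner sum in \eqref{eq:Mdef} therefore collapses to $|\bdry T'|$, and term-by-term comparison with $\partial_y A_T$ computed from \eqref{eq:Adef} immediately yields \eqref{eq:MderivA}.

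For \eqref{eq:Aandp}, I would argue by induction on $T$, comparing the recursions \eqref{eq:Arec} and \eqref{eq:prec}. The base case $T=\bullet$ checks out, since both sides evaluate to $1-x$. For the inductive step with $T=\wedge(T_1,\dots,T_r)$, substituting $y=1-x$ into \eqref{eq:Arec} and applying the induction hypothesis to each $T_i$ gives
\[A_T(x,1-x)=(1-x)+x\prod_{i=1}^{r}\bigl(1-p_{T_i}(x)\bigr),\]
which is exactly $1-p_T(x)$ after comparing with \eqref{eq:prec}. A probabilistic proof is also available: the sum $\sum_{T'\in\msc A(T)}x^{|T'|}(1-x)^{|\bdry T'|}$ equals the probability that the root cluster in $\Ber(x)$-site percolation on $T$ is admissible, which by definition is the complement of the event that some root-to-leaf path survives, i.e., $1-p_T(x)$.

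For \eqref{eq:Mderivp}, I would again proceed by induction, this time on the recursion \eqref{eq:Mrec}. The base case $M_\bullet(x,1-x,x)=1=p_\bullet'(x)$ is immediate. For the inductive step, substituting $y=1-x$ and $z=x$ into \eqref{eq:Mrec} and invoking both the induction hypothesis and part \eqref{eq:Aandp} yields
\[M_T(x,1-x,x)=\frac{p_T(x)}{x}+x\sum_{i=1}^{r}p_{T_i}'(x)\prod_{j\neq i}\bigl(1-p_{T_j}(x)\bigr).\]
On the other hand, differentiating $p_T(x)=x\bigl(1-\prod_i(1-p_{T_i}(x))\bigr)$ from \eqref{eq:prec} by the product rule and using the identity $1-\prod_i(1-p_{T_i}(x))=p_T(x)/x$ produces exactly the same expression for $p_T'(x)$, closing the induction.

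I do not anticipate any real obstacle: the entire argument is an efficient exploitation of the recursive structures developed in Lemmas~\ref{lemma:polrec}, \ref{lemma:SArec}, and \ref{lemma:Mrec}. The most delicate bookkeeping lies in matching the $p_T(z)/z$ term appearing in \eqref{eq:Mrec} with the contribution coming from differentiating the outer factor of $x$ in \eqref{eq:prec}, but this becomes transparent once part \eqref{eq:Aandp} is in hand.
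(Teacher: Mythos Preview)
Your proposal is correct and follows essentially the same route as the paper: the same direct specialization for \eqref{eq:MderivA}, the same comparison of the recursions \eqref{eq:Arec} and \eqref{eq:prec} for \eqref{eq:Aandp} (you make the induction explicit where the paper just says ``immediately''), and the same induction for \eqref{eq:Mderivp}, merely running the computation from \eqref{eq:Mrec} toward $p_T'$ rather than the other way around.
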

\begin{proof}
 For the proof of \eqref{eq:MderivA}, consider a vertex $v\in V(T)$. Then $p_v(1)=1-P_{T_v}(1,-1)=1$ by Lemma~\ref{lemma:properties}(c), and we thus have 
 \[
  x^{|T'|}y^{|\bdry T'|-1}\sum_{v\in\bdry T'}\left.\frac{p_v(z)}{z}\right|_{z=1}=|\bdry T'|x^{|T'|}y^{|\bdry T'|-1}
 \]
 for any fixed $T'\in\msc A(T)$. Hence 
 \[
  M_T(x,y,1) = \sum_{T'\in\msc A(T)} |\bdry T'|x^{|T'|}y^{|\bdry T'|-1} = \frac{\partial}{\partial y} A_T(x,y),
 \]
 as required.
 
 The identity \eqref{eq:Aandp} follows immediately from comparing the recursions \eqref{eq:Arec} and \eqref{eq:prec}. 
 
 Equality \eqref{eq:Mderivp} is trivially true for $T=\bullet$, and we will now use an inductive argument: Assuming that the identity holds for any trees $T_1,\dots,T_r$, we will show that it is also true for $T=\wedge(T_1,\dots,T_r)$. To do this, consider the recursion \eqref{eq:prec} and take the derivative:
 \begin{align*}
  \frac{\Di p_T(x)}{\Di x} 
  &= 1-\prod_{i=1}^r (1-p_{T_i}(x)) + x\sum_{i=1}^r \frac{\Di p_{T_i}(x)}{\Di x} \prod_{j\neq i} (1-p_{T_j}(x))\\
  &= \frac{p_T(x)}{x} + x\sum_{i=1}^r M_{T_i}(x,1-x,x) \prod_{j\neq i} A_{T_j}(x,1-x)\\
  &= M_T(x,1-x,x).
 \end{align*}
 For the second equality, we used \eqref{eq:prec}, \eqref{eq:Aandp}, and the induction hypothesis; and the final equality follows from \eqref{eq:Mrec}, the recursion for $M$. 
\end{proof}

Observe that by \eqref{eq:MderivA}, $A_T$ is uniquely determined by $M_T$, since $\eqref{eq:Arec}$ implies that $A_T(x,0)=0$. Moreover, $p_T$ is uniquely determined by $A_T$ according to \eqref{eq:Aandp}.

\section{Two complete invariants}\label{sec:leaf-induced}

As an immediate consequence of Definitions \ref{defn:pol},\ref{defn:polA}, and \ref{defn:polM}, we get that two isomorphic rooted trees $T_1\cong T_2$ have the same polynomials. The aim of this section is to show that the converse is true as well for the polynomials $P$ and $S$. Specifically, we will prove the following theorem:

\begin{thm}\label{thm:complete}
 The polynomials $P$ and $S$ as defined in Definition~\ref{defn:pol} are complete invariants for rooted forests. In other words, for rooted forests $F_1, F_2$ we have $P_{F_1}=P_{F_2}$ or $S_{F_1}=S_{F_2}$ if and only if $F_1\cong F_2$.  
\end{thm}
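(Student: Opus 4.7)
The plan is to induct on $n = |V(F)|$; the base cases $n = 0, 1$ are immediate. For the inductive step, assume $P_{F_1} = P_{F_2}$ (respectively $S_{F_1} = S_{F_2}$); by Lemma~\ref{lemma:properties}(a) and the analogue for $S$ derived from \eqref{eq:Srec}, we have $|V(F_1)| = |V(F_2)| = n \geq 2$. The crucial ingredient is a key lemma: \emph{for every non-empty rooted tree $T$, both $P_T$ and $S_T$ are irreducible in the UFD $\IZ[x,y]$}. Granting this, the factorizations $P_F = \prod_i P_{T_i}$ and $S_F = \prod_i S_{T_i}$ from Lemma~\ref{lemma:polrec}(a) and Remark~\ref{rem:def_extension} are essentially unique in $\IZ[x,y]$, so when $F$ has at least two components the multiset of component-polynomials determines the multiset of components by the inductive hypothesis (each $T_i$ having $< n$ vertices); when $F$ is a single tree $T$, the recursions \eqref{eq:Prec} and \eqref{eq:Srec} recover the polynomial of the branching forest $T - v_0$, which has $n - 1$ vertices, and induction finishes. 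Irreducibility moreover rules out the mixed case of a single tree and a multi-component forest producing the same polynomial.

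For $S_T$ the key lemma follows from Eisenstein's criterion at the prime $y \in \IZ[y]$, applied to $S_T \in \IZ[y][x]$: by induction on \eqref{eq:Srec}, $S_T$ is $x$-monic of degree $n$; the identity $S_T(x,0) = x^n$ (the only subtree with empty boundary is $T$ itself) places all non-leading $x$-coefficients in the ideal $(y)$; and $S_T(0,y) = y$ lies in $(y) \setminus (y^2)$. This gives irreducibility in $\IQ(y)[x]$, and $x$-monicity (hence content $1$) together with Gauss's lemma yields irreducibility in $\IZ[x,y]$.

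For $P_T$ the situation is subtler since $P_T(0,y) = P_T(x,0) = 1$ blocks any direct Eisenstein attempt, but the substitution $u = xy + 1$ rectifies this. Setting
\[
U_T(x,u) := P_T\bigl(x, (u-1)/x\bigr) = \sum_{F' \ssq T\ \text{leaf-induced}} x^{|V(F')|-|L(F')|} (u-1)^{|L(F')|},
\]
which lies in $\IZ[x,u]$ because $|V(F')| \geq |L(F')|$, one obtains the cleaner recursion $U_\bullet = u$ and $U_T = 1 - x + x \prod_i U_{T_i}$ for branching $T$. It follows that $U_T(1,u) = u^\ell$ (where $\ell$ is the number of leaves of $T$), so $U_T \equiv u^\ell \pmod{1-x}$; and $U_T(x,0) = (1-x)V_T(x)$ for some $V_T \in \IZ[x]$. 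An induction on the branching structure then shows $V_T(1) \geq 1$: if $r \geq 2$, or $r = 1$ with $T_1 = \bullet$, one computes $V_T(1) = 1$; while $r = 1$ with $T_1 \neq \bullet$ gives $V_T(1) = 1 + V_{T_1}(1) \geq 2$. Hence Eisenstein at $(1-x) \in \IZ[x]$ applies to $U_T \in \IZ[x][u]$, yielding irreducibility of $U_T$ in $\IZ[x,u]$.

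To transfer back to $P_T$, note that $u = xy + 1$ induces an isomorphism $\IZ[x,x^{-1},u] \cong \IZ[x,x^{-1},y]$ identifying $U_T$ with $P_T$; since $U_T(0,u) = 1$ gives $x \nmid U_T$, its irreducibility persists in this Laurent extension. Any nontrivial factorization $P_T = Q R$ in $\IZ[x,y]$ would force one of $Q, R$ to be a Laurent unit $\pm x^k$; the requirement $Q, R \in \IZ[x,y]$ gives $k \geq 0$, and $P_T(0,y) = 1$ (so $x \nmid P_T$) forbids $k \geq 1$. Hence that factor is $\pm 1$, a unit, contradicting nontriviality. The main obstacle is precisely this irreducibility of $P_T$: identifying the correct substitution and verifying $(1-x)^2 \nmid U_T(x,0)$ is the technical heart of the argument, after which the UFD-plus-recursion bookkeeping is routine.
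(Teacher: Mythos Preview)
Your argument is correct, and for $S_T$ it coincides with the paper's (Eisenstein at the prime $y\in\IZ[y]$). For $P_T$, however, you take a genuinely different route. The paper proves irreducibility of $P_T$ directly in $\IC[x,y]$: specializing a putative factorization $P_T=fg$ at $x=1$ forces $f(1,y)=(1+y)^{k_1}$, $g(1,y)=(1+y)^{k_2}$, and if both $k_i>0$ the product rule gives $\partial_x P_T|_{(1,-1)}=0$, contradicting the separately proved identity $\partial_x P_T|_{(1,-1)}=-s$ where $s$ is the number of stem vertices; hence $\deg_y f=0$, and $P_T(x,0)=1$ forces $f$ constant. Your approach instead passes through the substitution $u=xy+1$ to a polynomial $U_T\in\IZ[x,u]$ for which Eisenstein at $(1-x)$ applies, and then transfers irreducibility back along the isomorphism $\IZ[x,x^{-1},u]\cong\IZ[x,x^{-1},y]$. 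It is worth noting that your key quantity $V_T(1)$ is precisely the stem length $s$ (your recursion $V_T(1)=1+V_{T_1}(1)$ when $r=1$ counts stem vertices), so the two arguments are more closely related than they first appear: both ultimately hinge on $s\geq 1$ for trees. The paper's argument is shorter and yields the slightly stronger conclusion of irreducibility over $\IC$; yours stays within $\IZ[x,y]$ and is more uniform with the $S_T$ case.

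One small point of care: Eisenstein over the UFD $\IZ[x]$ gives irreducibility of $U_T$ in $\IQ(x)[u]$, not immediately in $\IZ[x,u]$; for the latter you need primitivity. You have the ingredients (the leading $u$-coefficient is $x^{n-\ell}$, and $U_T(0,u)=1$ shows $x\nmid U_T$, so the $\IZ[x]$-content is $1$), but this should be stated before, not after, the claimed irreducibility in $\IZ[x,u]$.
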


As pointed out above, it only remains to show that either of the two equalities is sufficient for $F_1\cong F_2$, and we devote the rest of the section to this proof. 

A key ingredient for the proof will be that in a unique factorization domain (\emph{UFD}), polynomials can -- by definition -- be factored uniquely into irreducibles; and we will employ the fact that both $\IZ[x,y]$ and $\IC[x,y]$ are UFDs. 

By the \emph{stem} of a rooted tree, we understand the set of vertices constructed in the following iterative way: Start by including the root node of $T$. If the last included vertex has a unique child, include that child as well. Otherwise stop. In other words, the stem consists of all those vertices between the root and the first ``branching'' of the tree (the two endpoints included). For convenience, we declare the stem of a rooted forest on zero or at least two components to be the empty set.  

\begin{lemma}\label{lemma:stem}
 Let $F$ be a rooted forest. Then, the number $s$ of vertices in the stem of $F$ equals $p'_F(1)=-\left.\frac{\partial P_F}{\partial x}\right|_{(1,-1)}$, with the partial derivative being zero if $F$ is not a tree. 
\end{lemma}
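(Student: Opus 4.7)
The equality $p_F'(1) = -\partial_x P_F|_{(1,-1)}$ is immediate from $p_F(x) = 1 - P_F(x,-1)$, so only the claim about the stem size is at stake. My plan is to proceed by induction on the number of vertices in $F$, handling the ``non-tree'' forests separately via the product rule.

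First I would dispose of the case where $F$ is empty or has $r \geq 2$ components $T_1,\dots,T_r$. For the empty forest $P_F \equiv 1$ gives $p_F \equiv 0$ and there is nothing to show. For $r \geq 2$, Lemma~\ref{lemma:polrec}(a) gives $p_F(x) = 1 - \prod_{i=1}^r (1 - p_{T_i}(x))$, so
\begin{equation*}
p_F'(x) = \sum_{i=1}^r p_{T_i}'(x) \prod_{j \neq i}(1 - p_{T_j}(x)).
\end{equation*}
By Lemma~\ref{lemma:properties}(c), each factor $1 - p_{T_j}(1) = P_{T_j}(1,-1)$ vanishes because the $T_j$ are non-empty. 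Since $r \geq 2$, every summand still contains at least one vanishing factor, so $p_F'(1) = 0$, matching the definition of the stem in this case.

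Next I would treat the case $F = T$, a rooted tree, by induction on $|V(T)|$. The base case $T = \bullet$ is handled by $p_\bullet(x) = x$, giving $p_T'(1) = 1$ and stem size $1$. For the induction step, write $T = \wedge(T_1,\dots,T_r)$ with $r \geq 1$ and differentiate the recursion \eqref{eq:prec}:
\begin{equation*}
p_T'(x) = 1 - \prod_{i=1}^r (1 - p_{T_i}(x)) + x \sum_{i=1}^r p_{T_i}'(x) \prod_{j \neq i}(1 - p_{T_j}(x)).
\end{equation*}
Evaluating at $x = 1$, the bracketed product vanishes (each $T_i$ is non-empty), and in the sum each summand for which the index set $\{j \neq i\}$ is non-empty contributes $0$ for the same reason. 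So if $r \geq 2$, $p_T'(1) = 1$, which matches the stem size since the root is itself a branching vertex. If $r = 1$, only one term of the sum survives and we get $p_T'(1) = 1 + p_{T_1}'(1)$.

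Finally I would match this to the stem size: when $r = 1$, the stem of $T$ consists of the root of $T$ concatenated with the stem of $T_1$, so $s_T = 1 + s_{T_1}$, and the induction hypothesis $p_{T_1}'(1) = s_{T_1}$ closes the argument. The main subtlety is the $r=1$ step, where one must observe that the stem of $T_1$ (which includes the root of $T_1$) and the root of $T$ combine disjointly to give the stem of $T$; checking a small case such as $T_1 = \bullet$ or $T_1$ with branching root confirms the bookkeeping. Nothing more than elementary product-rule computation and careful case separation is needed.
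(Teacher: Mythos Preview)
Your proof is correct and follows essentially the same approach as the paper: both arguments remove the root, invoke the recursion, and use the key fact $P_{T_j}(1,-1)=0$ (equivalently $p_{T_j}(1)=1$) from Lemma~\ref{lemma:properties}(c) to kill the unwanted terms. The only cosmetic differences are that the paper inducts on the stem size $s$ and works with $P_F$ directly, whereas you induct on $|V(T)|$, work with $p$, and split the tree step into the subcases $r=1$ versus $r\geq 2$; these amount to the same computation.
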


\begin{proof}
 The claim is obviously true for the empty rooted forest. In all other cases, we use induction on $s$, beginning with $s=0$ (i.e. $F$ has at least two components).
 
 For $s=0$, denote by $T_1,\dots,T_r$ for $r\geq2$ the components of $F$. Then $P_{T_i}(1,-1)=0$ for all $i=1,\dots,r$ by Lemma~\ref{lemma:properties}(c), so the polynomial 
 \[
  P_F(x,-1)=\prod_{i=1}^r P_{T_i}(x,-1)
 \]
 has an $r$-fold zero at $x=1$. In particular, $\left.\frac{\partial P_F}{\partial x}\right|_{(1,-1)}=0$.
 
 Assume that we have already shown the statement for some $s\geq 0$. Let $F$ be any rooted tree with $s+1$ vertices in its stem. Then $F=\wedge(F')$ where $F'$ is the forest obtained by removing the root of $F$, and $F'$ is a rooted forest with $s$ stem vertices. In the special case where $F'$ is the empty forest, $F$ is the rooted tree on a single vertex, and we can check directly that $-\left.\frac{\partial (1+xy)}{\partial x}\right|_{(1,-1)}=1$. In any other case, we employ Lemma \ref{lemma:polrec}(b) and the induction hypothesis to obtain 
 \[
  \left.\frac{\partial P_F}{\partial x}\right|_{(1,-1)} 
  = -1 + P_{F'}(1,-1) + \left.\frac{\partial P_{F'}}{\partial x}\right|_{(1,-1)}
  = -1 - s,
 \]
 since $P_{F'}(1,-1)=0$.
\end{proof}

\begin{prop}\label{prop:irred}
 Let $F$ be a non-empty rooted forest. Then, $P_F$ is irreducible in $\IC[x,y]$ if and only if $F$ is a tree. 
\end{prop}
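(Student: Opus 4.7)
My plan is to split the biconditional: the ``not a tree $\Rightarrow$ reducible'' direction is immediate, while for irreducibility I will use a short evaluation-at-$x=1$ argument combined with the preceding lemma on the stem. If $F$ has components $T_1,\dots,T_r$ with $r\geq 2$, Lemma~\ref{lemma:polrec}(a) gives $P_F=\prod_{i=1}^rP_{T_i}$, each factor non-constant by Lemma~\ref{lemma:properties}(a), so $P_F$ is reducible.

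For the converse, let $T$ be a rooted tree with $\ell$ leaves, and suppose toward a contradiction that $P_T=QR$ is a non-trivial factorization in the UFD $\IC[x,y]=\IC[x][y]$. First I would observe that both factors have positive $y$-degree: the constant-in-$y$ coefficient of $P_T$ is $1$, so any factor with $\deg_y=0$ lies in $\IC[x]$ and divides $1$, hence is a unit. Set $a:=\deg_yQ\geq 1$ and $b:=\deg_yR\geq 1$, with $a+b=\ell$. Next I would specialize at $x=1$: by Lemma~\ref{lemma:properties}(c), $P_T(1,y)=(1+y)^\ell$, and since the leading-in-$y$ coefficient of $P_T$ is $x^{|T|}$ (the only leaf-induced subforest containing all $\ell$ leaves is $T$ itself), the leading-in-$y$ coefficients of $Q$ and $R$ are monomials in $x$ which do not vanish at $x=1$. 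Therefore $Q(1,y)$ and $R(1,y)$ still have $y$-degrees $a$ and $b$, and factoring $(1+y)^\ell$ in the UFD $\IC[y]$ forces
\[
 Q(1,y)=c(1+y)^a,\qquad R(1,y)=c^{-1}(1+y)^b
\]
for some $c\in\IC^\times$.

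Since $a,b\geq 1$, both $Q(1,-1)$ and $R(1,-1)$ vanish, and consequently $(x-1)^2$ divides $Q(x,-1)R(x,-1)=P_T(x,-1)=1-p_T(x)$ in $\IC[x]$. The contradiction is then delivered by the lemma directly preceding the proposition: $p_T'(1)$ equals the number of stem vertices of $T$, and this number is at least $1$ since the root always belongs to the stem of a tree. Hence $(x-1)^2\nmid 1-p_T(x)$, which is the desired contradiction. I expect the main delicate point to be verifying that specialization at $x=1$ does not drop the $y$-degrees of $Q$ or $R$; the fact that the leading-in-$y$ coefficient of $P_T$ is the pure monomial $x^{|T|}$ is exactly what rules this out.
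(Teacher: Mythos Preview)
Your proof is correct and follows essentially the same route as the paper: both arguments specialize at $x=1$ to factor $(1+y)^\ell$, use the stem lemma to obtain the derivative contradiction $-s=0$ when both factors vanish at $(1,-1)$, and invoke $a_0(x)=1$ to rule out factors lying in $\IC[x]$. The only cosmetic difference is ordering---you dispose of the $\deg_y=0$ case first (via $a_0=1$) and then phrase the contradiction as $(x-1)^2\mid P_T(x,-1)$, whereas the paper applies the product rule first and handles the $k_1=0$ case afterward; your extra remark that the leading-in-$y$ coefficient is a monomial in $x$ is a nice way to ensure the $y$-degrees survive specialization, replacing the paper's post-hoc degree count.
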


\begin{proof}
 If $F$ is not a tree, then it consists of at least 2 components, each containing at least one vertex. Thus by part (a) in Lemma \ref{lemma:polrec}, $P_F$ factors into non-constant polynomials. 
 
 Now assume that $F$ is a tree on $n\geq 1$ vertices with $s\geq 1$ vertices in its stem, having $\ell\geq 1$ leaves. Assume $P_F=fg$ for $f,g\in\IC[x,y]$. Specializing to $x=1$, we obtain $f(1,y)=(1+y)^{k_1}$ and $g(1,y)=(1+y)^{k_2}$ for $k_1,k_2\geq 0$ with $k_1+k_2=\ell$, according to Lemma~\ref{lemma:properties}(c) and because the factors $1+y$ are irreducible. If both $k_1,k_2>0$ then the product rule dictates
 \[
  -s=
  \left.\frac{\partial P_F}{\partial x}\right|_{(1,-1)} = 
  f(1,-1)\cdot \left.\frac{\partial g}{\partial x}\right|_{(1,-1)} + 
  g(1,-1)\cdot \left.\frac{\partial f}{\partial x}\right|_{(1,-1)}
  =0,
 \]
 a contradiction. Hence, without loss of generality $k_1=0, k_2=\ell$, and so $\deg_y(P_F)=\ell=\deg_y(g)$, which implies $\deg_y(f)=0$. In other words, $f$ can be considered as a univariate polynomial in $x$. 
 
 Now write 
 \[
  P_F(x,y)=a_{\ell}(x)y^\ell + \dots + a_1(x)y+a_0(x)
 \]
 for suitable polynomials $a_0,a_1,\dots,a_\ell\in\IC[x]$. If $f(x)$ is a divisor of $P_F$, it must therefore be a common divisor of $a_0,\dots,a_\ell$. However, from Definition \ref{defn:pol} we infer that $a_0(x)=1$. Thus $f(x)$ is a constant. 
\end{proof}

\begin{prop}\label{prop:irred2}
 Let $F$ be a non-empty rooted forest. Then $S_F$ is irreducible in $\IZ[x,y]$ if and only if $F$ is a tree. 
\end{prop}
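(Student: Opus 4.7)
If $F$ is not a tree, then it has components $T_1,\dots,T_r$ with $r\geq 2$, each a non-empty rooted tree. By the defining product formula~\eqref{eq:def_extension} and the fact that each $S_{T_i}$ is a non-constant polynomial (of degree $|T_i|\geq 1$ in $x$ by~\eqref{eq:Srec}), $S_F$ is reducible. So the substantial direction is to show that $S_T$ is irreducible in $\mathbb{Z}[x,y]$ when $T$ is a rooted tree. My plan is to apply Eisenstein's criterion to $S_T$, viewed as a polynomial in $x$ over the UFD $\mathbb{Z}[y]$, with respect to the prime element $y\in\mathbb{Z}[y]$, followed by Gauss's lemma.

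To make Eisenstein go through, I need three facts about $S_T$. First, that $S_T$ is monic in $x$ of degree $|T|$: this follows by an easy induction from the recursion~\eqref{eq:Srec}, since multiplying a product of monic polynomials by $x$ and adding $y$ preserves the monic structure in $x$ and raises the $x$-degree by the correct amount. Second, that every non-leading coefficient of $S_T$ (as a polynomial in $x$ over $\mathbb{Z}[y]$) is divisible by $y$; equivalently, that $S_T(x,0)=x^{|T|}$. By induction using~\eqref{eq:Srec}, setting $y=0$ gives $S_T(x,0)=0+x\prod_i S_{T_i}(x,0)=x\prod_i x^{|T_i|}=x^{|T|}$. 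Third, that the constant coefficient in $x$, namely $S_T(0,y)$, equals $y$ and thus is not divisible by $y^2$: again induction on~\eqref{eq:Srec} yields $S_T(0,y)=y$.

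These three observations are precisely the hypotheses of Eisenstein's criterion for $S_T\in\mathbb{Z}[y][x]$ at the prime $y\in\mathbb{Z}[y]$, so $S_T$ is irreducible in $\mathbb{Q}(y)[x]$. Because the leading coefficient in $x$ is $1$, $S_T$ is primitive in $\mathbb{Z}[y][x]$, and Gauss's lemma upgrades irreducibility in $\mathbb{Q}(y)[x]$ to irreducibility in $\mathbb{Z}[y][x]=\mathbb{Z}[x,y]$.

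I do not expect any serious obstacle: all three Eisenstein hypotheses follow directly by induction from the recursion~\eqref{eq:Srec}, and the step from $\mathbb{Q}(y)[x]$ to $\mathbb{Z}[x,y]$ is standard once monicity (hence primitivity) in $x$ is noted. The conceptual point to highlight is simply the choice of the ``right'' prime: viewing $S_T$ through the lens of $\mathbb{Z}[y][x]$ rather than, say, $\mathbb{Z}[x][y]$ or $\mathbb{C}[x,y]$, lets $y$ play the role of the Eisenstein prime, mirroring (but technically simpler than) the irreducibility argument used for $P_F$ in Proposition~\ref{prop:irred}.
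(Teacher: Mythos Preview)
Your proposal is correct and follows essentially the same approach as the paper: both apply Eisenstein's criterion to $S_T\in\IZ[y][x]$ at the prime $y$, then use monicity in $x$ to conclude irreducibility in $\IZ[x,y]$. The only cosmetic difference is that you verify the Eisenstein hypotheses by induction on the recursion~\eqref{eq:Srec}, whereas the paper reads them off directly from the combinatorial definition of $S_T$ (the full tree gives the monic leading term, proper subtrees have nonempty boundary, and the empty subtree contributes exactly $y$).
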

\begin{proof}
 If $F$ is not a tree, the reducibility of $S_F$ follows from the definition in~\eqref{eq:def_extension}.
 
 To show irreducibility in the case where $F$ is a rooted tree, we use Eisenstein's criterion (cf. \cite[Proposition~A.5.3]{morandi1996field}) on the integral domain $\mf D:=\IZ[y]$. Since $\IZ[x,y]\cong\mf D[x]$, we can consider the prime ideal $\mf p=\langle y\rangle$ in $\mf D$. Writing $S_F$ as 
 \begin{equation}\label{eq:Syx}
  S_F(x,y)=a_0(y) + a_1(y)x + \dots + a_n(y)x^n
 \end{equation}
 with $a_0,a_1,\dots,a_n\in\mf D$, we note that $a_n=1$ since the highest $x$-degree term in $S_F$ stems from the subgraph that is the entire tree, which contains $n=|V(F)|$ vertices, and no boundary vertices. Hence $a_n\notin \mf p$. Moreover, any smaller subtree $T'\ssq F$ omits a vertex in $F$, and therefore has a vertex adjacent to, but not in $T'$ (in the special case where $T'=\emptyset$, this vertex is the root of $F$). Thus, the strict subtrees all contribute monomials divisible by $y$, and hence $a_0,\dots,a_{n-1}\in\mf p$. Finally, for $T'=\emptyset$ we have $\bdry T'=\{\rt\}$, thus $a_0(y)=y\notin \mf p^2$ (and this is only correct if $F$ is a tree). Therefore $S_F$ cannot be factored into non-constant polynomials in $\mf D[x]$ according to Eisenstein's criterion, and since $a_n=1$ it is even irreducible in $\IZ[x,y]$. 
\end{proof}

We now have all the tools assembled to prove Theorem~\ref{thm:complete}.

\begin{proof}[Proof of Theorem~\ref{thm:complete}]
 Assume first $P_{F_1}=P_{F_2}$. Since the polynomial determines the number of vertices and the number of vertices in the stem, those characteristics of $F_1$ and $F_2$ coincide, and we denote them by $n$ and $s$, respectively, as in the proof of Proposition \ref{prop:irred}. 
 
 Suppose the claim is false. Then there exist non-isomorphic $F_1,F_2$ with $P_{F_1}=P_{F_2}$, and we can consider such a pair with $n$ minimal. If $s\geq1$, then $F_i$ is a tree with root $\rho_i$ (for $i=1,2$), and we can consider $F_1-\rho_1$ and $F_2-\rho_2$ instead. As noted in the previous section, we have $F_i\cong \wedge(F_i-\rho_i)$ for $i=1,2$, so by Lemma~\ref{lemma:polrec}(b) we obtain $P_{F_1-\rho_1}= P_{F_2-\rho_2}$. By the minimality of $F_1,F_2$, it follows that $F_1-\rho_1\cong F_2-\rho_2$, and hence $F_1\cong F_2$, a contradiction. Therefore, the minimal counterexamples $F_1,F_2$ have to be either empty (which is trivially not a counterexample) or forests with at least 2 components each. 
 
 So, denote by $T_1,\dots,T_r$ and $T'_1,\dots,T'_{r'}$ the components of $F_1$ and $F_2$, respectively. Lemma~\ref{lemma:polrec} yields
 \[
  \prod_{i=1}^r P_{T_i} = P_{F_1} = P_{F_2} = \prod_{j=1}^{r'} P_{T'_j}.
 \]
 As we are working in the UFD $\IC[x,y]$ and the factors $P_{T_i}$ and $P_{T'_j}$ are monic irreducibles by Proposition~\ref{prop:irred}, it follows that $r=r'$ and that there is a permutation $\pi\in S_r$ with $P_{T_i}=P_{T'_{\pi(i)}}$ for $i=1,\dots,r$. Invoking again the minimality of $F_1, F_2$, we conclude $T_i\cong T'_{\pi(i)}$, and these isomorphisms can be glued together to an isomorphism $F_1\cong F_2$, which is the desired contradiction.\\
 
 Assume now $S_{F_1}=S_{F_2}$ instead. Observe that $S$ again determines the number $n$ of vertices, and the number $s$ of vertices in the stem. Indeed, $n$ is given as the $x$-degree, and $s=\max(j,n)$, where $j$ is the lowest index such that $\deg_y a_j(y)>1$ when we represent $S$ as in equation \eqref{eq:Syx} (this is because the last vertex in the stem is the closest vertex to the root that has more than one descendant, so the subtree induced by the stem is the smallest subtree to have a boundary with more than one vertex, unless $s=n$). Observe moreover that for a rooted tree $T$, we have $S_{T-\rt}=\frac{1}{x}(S_T-y)$, which follows from comparing the recursion \eqref{eq:Srec} with \eqref{eq:def_extension}. 
 
 With these observations in place, the rest of the argument works entirely analogously to the previous case, except that we work in the UFD $\IZ[x,y]$ (rather than $\IC[x,y]$), due to Proposition~\ref{prop:irred2}.  
\end{proof}

\subsection*{An application to the reconstruction of rooted trees}

The reconstruction conjecture, going back to Ulam \cite{Ulam60} and Kelly \cite{Kelly57}, asks whether every simple graph $G=(V,E)$ on at least 3 vertices is uniquely (i.e. up to isomorphism) determined by the multiset, called \emph{deck}, of its vertex-deleted subgraphs $G-v$ for $v\in V$. It has been widely investigated since these initial papers. In the case of trees, it was already shown in \cite{Kelly57} that they are reconstructible, with stronger results (using fewer subgraphs) obtained in \cite{HP66} and \cite{Bondy69}. Moreover, Ne\v{s}et\v{r}il \cite{Nesetril71} considered a version of tree reconstruction where the deck was instead of the collection of asymmetric maximal proper subtrees. In the same line, we will show in this section that Theorem~\ref{thm:complete} implies that rooted trees are uniquely determined by their inclusion-maximal leaf-induced proper subtrees:

\begin{prop}
 Let $F$ be a rooted forest with $\ell\geq 3$ leaves. Then $F$ can be uniquely reconstructed from its deck $\msc D(F)$ of maximal leaf-induced proper subforests. 
\end{prop}
\begin{proof}
 We will show that we can reconstruct $P_F$ from $\msc D(F)$, the claim then follows from Theorem~\ref{thm:complete}. The maximal leaf-induced proper subforests each contain $\ell-1$ leaves, hence the number $\ell$ is reconstructible from the deck. Observe that a leaf-induced subtree with $k$ leaves is contained in $\ell-k$ trees in $\msc D(F)$, and that thus by Definition~\ref{defn:pol}, we have 
 \[
  [y^k]P_F(x,y) = \frac{1}{\ell-k} \sum_{F'\in \msc D(F)} [y^k]P_{F'}(x,y)
 \]
 for all $0\leq k\leq \ell-1$. Note that the right-hand side is computable given $\msc D(F)$, and hence the same holds true for 
 \[
  \tilde P_F(x,y) := P_F(x,y) - x^{|F|}y^\ell = \sum_{k=0}^{\ell-1} y^k \big( [y^k] P_F(x,y)\big). 
 \] 
 Denote by $s_F$ and $s_{F'}$ the number of vertices in the stem of $F$ and $F'\in \msc D(F)$, respectively. Since we assume $\ell\geq 3$, there exists an $F'$ such that $s_{F'}=s_F$, and hence $s_F=\min_{F'\in \msc D(F)} s_{F'}$. (This is no longer true for $\ell=2$: The graphs in $\msc D(F)$ would then be two paths, each connecting a root to a leaf, and there is no way for us to determine how large the intersection of the two paths in $F$ is.) Accordingly, $s_F$ is reconstructible from $\msc D(F)$, and using Lemma~\ref{lemma:stem} we obtain
 \[
  |F|=(-1)^\ell \left.\frac{\partial x^{|F|} y^\ell}{\partial x}\right|_{(1,-1)} = (-1)^{\ell + 1}\left(s_F + \left.\frac{\partial \tilde P_F(x,y)}{\partial x}\right|_{(1,-1)}\right).
 \]
 The right-hand side is again reconstructible, which implies that $P_F$ is reconstructible, concluding the proof. 
\end{proof}

\begin{rem}
 The author is unaware of a proof that rooted trees are reconstructible from their deck of $\ell$ maximal rooted proper subtrees which -- analogously to the previous Proposition -- makes use of the completeness of $S$. Indeed, given a rooted subtree of some rooted tree $T$, it is not clear which of the leaves are also leaves of $T$, and thus reconstructing $S$ directly from the deck seems difficult. 
\end{rem}

\section{Remarks, examples, and open problems}\label{sec:discussion}

We begin by making a number of remarks, combined with examples and non-examples, concerning the results of Sections~\ref{sec:ident} and \ref{sec:leaf-induced}. 

\begin{figure}
\includegraphics{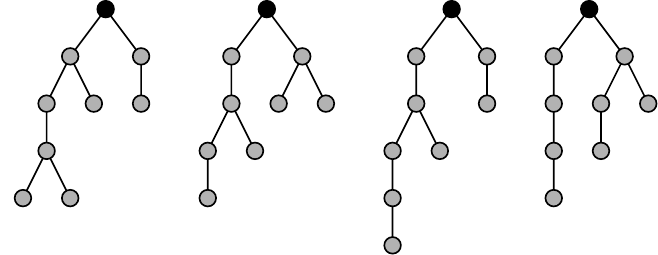}
\caption{Non-isomorphic rooted trees $T_1,T_2,T_3,T_4$ (from left to right).} \label{fig:T14}
\end{figure}

\begin{rem}
 Unlike $P$, the univariate polynomial $p$ is not a complete invariant for rooted trees: As S. Wagner pointed out (\!\!\cite{WagnerCom}), the trees $T_1,T_2$ and $T_3,T_4$ in Figure~\ref{fig:T14} form two pairs of non-isomorphic trees that share the same polynomial, namely
 \begin{align*}
  p_{T_1}(x)=p_{T_2}(x) &= 2x^3+x^5-3x^6-x^7+3x^8-x^9, \ \text{and}\\
  p_{T_3}(x)=p_{T_4}(x) &= x^3+x^4-x^7-x^8+x^9.
 \end{align*}
 In fact, it can be verified by a computer search that these are the smallest such pairs. To exemplify Theorem~\ref{thm:complete}, the corresponding bivariate polynomials are given by
 \begin{align*}
  P_{T_1}&= 1 + 2x^3y + 2x^5y + x^5y^2 + 3x^6y^2 + 2x^7y^2 + x^7y^3 + 3x^8y^3 + x^9y^4\\
  P_{T_2}&= 1 + 2x^3y + x^4y + x^4y^2 + x^5y + 3x^6y^2 + 2x^7y^2 + x^7y^3 + 3x^8y^3 + x^9y^4\\
  P_{T_3}&= 1 + x^3y + x^4y + x^6y + x^6y^2 + x^7y^2 + x^8y^2 + x^9y^3\\
  P_{T_4}&= 1 + x^3y + x^4y + x^5y + x^5y^2 + x^7y^2 + x^8y^2 + x^9y^3,
 \end{align*}
 which are pairwise different. 
\end{rem}

\begin{rem}
 Lemma~\ref{lemma:MAp} implies that $M$ is a stronger invariant (in the sense that it distinguishes more trees) than $A$, and $A$ is a stronger invariant than $p$. In fact, these relations are strict: The trees $T_3$ and $T_4$ from Figure~\ref{fig:T14} are distinguished by $A$ but not by $p$, and the trees $T_1$ and $T_2$ are distinguished by $M$ but not by $A$. Indeed, we have 
 \begin{align*}
  A_{T_1}&=A_{T_2}= y + xy^2 + x^2y^2 + x^2y^3 + 2x^3y^3 + x^4y^3 + x^4y^4 + x^5y^4\\
  A_{T_3}&= y +\! xy^2 +\! 2x^2y^2 +\! x^3y^2 +\! x^3y^3\! +\! 2x^4y^3\! +\! 2x^5y^3\! +\! x^6y^3\\
  A_{T_4}&= y +\! xy^2 +\! x^2y^2 +\! x^2y^3 +\! x^3y^2 +\! 2x^3y^3 +\! x^4y^2 +\! 2x^4y^3 +\! 2x^5y^3 +\! x^6y^3
 \end{align*}
 and 
 \begin{align*}
  M_{T_1} &= \frac{p_{T_1}(z)}{z} + xy(2z+2z^3-3z^4+z^5) + x^2y(1+z+2z^3-3z^4+z^5) \\
  &\quad + x^2y^2(1+z+2z^2-z^3) + 2x^3y^2(3+3z+z^2-z^3)\\
  &\quad + x^4y^2(2+2z-z^2) + x^4y^3(3+z) + 4x^5y^3\\
  M_{T_2} &= \frac{p_{T_2}(z)}{z} + xy(2z+z^3-z^4) + x^2y(3z-z^3) + x^2y^2(2+z^2+z^3-z^4)\\
  &\quad + 2x^3y^2(3+4z-z^3) + x^4y^2(2+2z-z^2) + x^4y^3(3+z) + 4x^5y^3.
 \end{align*}
 In light of these examples, it is worth noting that it is possible to fully describe all trees with 3 leaves that share the same $p_T$ with a different tree. In fact, they are of the structure depicted in Figure~\ref{fig:Tp3l} (but we omit the proof in the interest of brevity). It is then easy to see that these trees will always be distinguished by $A$, since $T$ has an admissible subgraph with $s+3k-\gb-2$ vertices, and 2 boundary vertices; whereas the largest admissible subgraph in $\tilde T$ with 2 boundary vertices contains only $s+3k-2\gb-2$ vertices, hence 
 $\deg_x [y^2]A_T > \deg_x [y^2]A_{\tilde T}$.
 
 In full generality, it appears to be a difficult problem to give a graph-theoretic description for the rooted trees $T$ that have a ``cousin''  $T'$ such that $p_T=p_{T'}$ (or $A_T=A_{T'}$). 
\end{rem}

\begin{figure}
\includegraphics{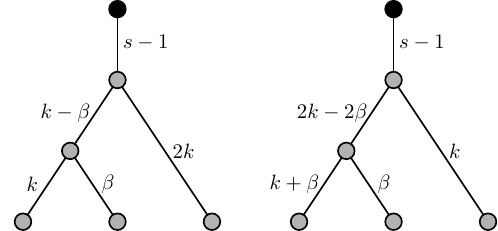}
\caption{The structure of non-isomorphic rooted trees $T$ (left) and $\tilde T$ (right) with 3 leaves with $p_T(x)=p_{\tilde T}(x)=x^s(x^k+x^{2k-\gb}-x^{3k}-x^{4k-\gb}+x^{4k})$. An edge labelled by $w$ indicates a path on $w$ edges. Here, $s$ denotes the number of vertices in the stem, $k\geq2$, and $\gb\in\{1,\dots,k-1\}$.} \label{fig:Tp3l}
\end{figure}

\begin{rem}
 It is worth emphasizing that despite satisfying the same recursion formula -- compare \eqref{eq:Srec} and \eqref{eq:Arec} -- and only differing in their initial values, the polynomial $S$ is a complete invariant, whereas the polynomial $A$ is not. In particular, it follows from the proof of Theorem~\ref{thm:complete} that $A_T$ is reducible for some trees $T$. This is obvious at first glance, since $y$ is a divisor of $A_T$ for every $T$, but this cannot be the only obstacle since otherwise $\frac{1}{y}A$ could be a complete invariant, and therefore also $A$. Indeed, the branches of the trees $T_1$ and $T_2$ from the previous remarks have a more interesting factorization, namely 
 \[
  y(1+xy)(1+x^2y) \quad \text{ and }\quad y(1+x)
 \]
 for the two branches of $T_1$, and 
 \[
  y(1+x)(1+x^2y) \quad \text{ and }\quad y(1+xy)
 \]
 for the two branches of $T_2$. 
\end{rem}

\begin{rem}
 Theorem 3.2 in \cite{Liu21tree} gives a method to obtain a complete invariant for unrooted trees from a complete polynomial invariant for rooted trees that is irreducible in a suitable polynomial ring. The idea is to replace the unrooted tree by a rooted forest that determines the tree up to isomorphism, and then assign to the forest the product of the polynomials of its connected components. While the same idea works for the polynomials of Theorem~\ref{thm:complete}, we prefer to formulate the statement in terms of complete invariants for rooted trees instead. 
\end{rem}

\begin{rem}
 As an anonymous reviewer pointed out, many other polynomial invariants for rooted trees are defined by considering characteristics of either arbitrary edge sets (as in \cite{GM89,CG91Tutte}) or for special classes of subtrees (as in \cite{Liu21tree,RW22polynomial}). The invariant $S$ is special in the sense that it encodes characteristics (the number of vertices and boundary vertices) for all rooted subtrees. This raises the following open question: For which pairs of non-negative, integer characteristics $\ga(T'),\gb(T')$, defined for all subtrees $T'$ of a rooted tree $T$, is the invariant $F_T(x,y)=\sum_{T'\ssq T} x^{\ga(T')} y^{\gb(T')}$ complete for rooted trees? In a similar vein, one might also ask for which kinds of subtrees the polynomial $\sum_{T'} x^{|T'|} y^{|L(T')|}$ is complete. 
\end{rem}

We also state the following conjecture:

\begin{conj}\label{conj}
 The polynomial $M$ defines a complete invariant for rooted trees.
\end{conj}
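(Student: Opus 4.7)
The approach is strong induction on $|T|$, modelled on the proof of Theorem~\ref{thm:complete}. The base case $|T|=1$ is immediate since $M_\bullet=1$. For the inductive step, first extract from $M_T$ the auxiliary polynomials $A_T(x,y)$ (via the identity $M_T(x,y,1)=\tfrac{\partial}{\partial y}A_T$ from Lemma~\ref{lemma:MAp}, together with $A_T(x,0)=0$) and $p_T(x)=1-A_T(x,1-x)$. One also reads off the coarse invariants $|T|$, $\ell(T)$, and the root-outdegree $r$, the last from
\[
 \left.\tfrac{\partial M_T}{\partial x}\right|_{x=0}=y^{r-1}\sum_{i=1}^r \tfrac{1}{z}p_{T_i}(z),
\]
which comes from the unique admissible subtree $T'=\{\text{root}\}$ of size one.

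If $r=1$, the recursion \eqref{eq:Mrec} simplifies to $M_T=\tfrac{1}{z}p_T(z)+xM_{T_1}$, so $M_{T_1}$ is recovered directly, and the inductive hypothesis applied to $T_1$ identifies $T$ up to isomorphism. The substantive case is $r\geq2$, where \eqref{eq:Mrec} and \eqref{eq:Arec} furnish the two aggregate quantities
\[
 A_F:=\prod_{i=1}^r A_{T_i}=\tfrac{1}{x}(A_T-y),\qquad N:=\sum_{i=1}^r M_{T_i}\prod_{j\neq i}A_{T_j}=\tfrac{1}{x}\bigl(M_T-\tfrac{1}{z}p_T(z)\bigr).
\]
Since $|T_i|<|T|$, the inductive hypothesis pins down each branch $T_i$ from its own $M_{T_i}$, so it suffices to show that the pair $(A_F,N)$ uniquely determines the multiset $\{M_{T_i}\}_{i=1}^r$.

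\textbf{The main obstacle} is precisely this multiset reconstruction, which is where the strategy of Theorem~\ref{thm:complete} does not transfer directly. There, the invariants $P$ and $S$ are multiplicative on forests and irreducible on trees, so UFD factorization of $P_F$ or $S_F$ recovers the branches. Here $A$ is not a complete invariant (as illustrated in the remarks after Theorem~\ref{thm:complete}), so the $A_{T_i}$'s can share irreducible factors or even coincide for non-isomorphic branches, blocking a naive partial-fraction attack on the identity $N/A_F=\sum_i M_{T_i}/A_{T_i}$ in the fraction field of $\IZ[x,y]$. The strategy I would pursue is to exploit the extra variable $z$, which $A$ lacks: for each $k\geq 2$ the coefficient $[x^k]M_T\in\IZ[y,z]$ is a sum over admissible subtrees of $T$ of size $k$, and unravelling it through \eqref{eq:Mrec} yields successive identities mixing the $T_i$ with their own branches $(T_i)_1,\ldots,(T_i)_{r_i}$. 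In particular, $[x^2]M_T$ already separates the contributions of branches by their outdegree $r_i$, since that contribution appears at $y$-degree $r+r_i-2$. The hope is that iterating this refinement produces enough independent constraints to determine the multiset, in the spirit of Newton-like relations among symmetric functions of the pairs $(A_{T_i},M_{T_i})$. Proving that such an iteration terminates with sufficient rigidity -- rather than merely producing necessary conditions consistent with multiple multisets -- is the essential difficulty, and is presumably the reason the statement remains only a conjecture.
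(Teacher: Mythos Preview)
The statement is a conjecture, not a theorem: the paper provides no proof, only computer verification up to $20$ vertices, and says explicitly that it has neither proof nor counterexample. Your proposal is likewise not a proof, and you acknowledge as much in the closing sentence, so there is no proof-to-proof comparison to make. What you have written is a careful diagnosis of why the argument behind Theorem~\ref{thm:complete} does not transfer.

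That diagnosis matches the paper's own. The paper observes that since \eqref{eq:Mrec} is additive in the $M_{T_i}$ rather than multiplicative, ``it seems likely that any proof of the conjecture would require an approach different from the one via irreducibility of polynomials.'' Your reduction of the inductive step to reconstructing the multiset $\{M_{T_i}\}$ from the pair $(A_F,N)$ is correct, as are the preliminary extractions of $A_T$, $p_T$, and the root-degree $r$ from $M_T$. And your reason for why UFD factorization and partial fractions both fail---that $A$ is not complete and the $A_{T_i}$ need be neither irreducible nor coprime---is exactly what the explicit factorizations of the branches of $T_1$ and $T_2$ in the paper's remarks illustrate.

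The speculative part, exploiting the $z$-variable through successive coefficients $[x^k]M_T$, is only a heuristic, and one detail is overstated. Expanding $[x^2]M_T$ via \eqref{eq:Mrec} and \eqref{eq:Arec} gives, besides the term $y^{r+r_i-2}\sum_k\frac{1}{z}p_{(T_i)_k}(z)$ coming from $[x^1]M_{T_i}$, cross-terms $y^{r-2}\cdot\frac{1}{z}p_{T_i}(z)\cdot y^{r_k}$ arising from $[x^1]A_{T_k}$ with $k\neq i$; these land at the same $y$-degrees, so the stratification by branch outdegree is not clean. This is not an error in a proof---you are not claiming one---but it does signal that even the first step of your proposed iteration is more entangled than suggested.
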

This has been verified using \texttt{Mathematica} for all rooted trees up to 20 vertices, by evaluating $M$ with Lemma~\ref{lemma:Mrec} for all the trees that are not already distinguished by $A$. However, we at present do not have a proof or counterexample for this conjecture. Moreover, since the recursion formula \eqref{eq:Mrec} for $M$ does not involve a product of the $M_{T_i}$ it seems likely that any proof of the conjecture would require an approach different from the one via irreducibility of polynomials used in the proof of Theorem~\ref{thm:complete}. On a related note, we also do not know if the probability generating function obtained from $M$ in \eqref{eq:pgfTS} is a complete invariant in $\IQ[x]$. Using \texttt{Mathematica} and employing similar considerations as above, this has been checked for all rooted trees on up to 15 vertices. 

In this context, it should be pointed out that each of the polynomials we considered in this paper are either complete invariants of rooted trees; or asymptotically almost all trees on $n$ vertices have a cousin with the same associated polynomial. Indeed, assume one of the invariants $p,P,S,A,M$ is not complete, then there exist rooted trees $T'\ncong T''$ such that both $T'$ and $T''$ are assigned the same value of the invariant. If $T$ is any tree that has a copy of $T'$ as fringe subtree, one can replace that copy by a copy of $T''$ instead. This produces a tree that is indistinguishable from $T$ via the invariant, according to the recursive formulas in Lemmas~\ref{lemma:polrec},\ref{lemma:SArec}, and \ref{lemma:Mrec}. But since asymptotically almost all rooted trees contain a given tree $T'$ as a fringe subtree (this follows e.g. from Theorem 3.1 in \cite{Wag15}, where the additive functional is the number of fringe subtrees isomorphic to $T'$), the proportion of rooted trees with such a cousin will tend to 1. 

In particular, either Conjecture~\ref{conj} holds true, or 
\[
 \bP\left[\left\{\text{There are rooted trees $T'\ncong T''$ on $n$ vertices s.t. }M_{T'}=M_{T''}\right\}\right] \to 1
\]
as $n\to\infty$ where $\bP$ is the uniform probability measure on the set of non-isomorphic rooted trees on $n$ vertices.

\section*{Acknowledgements} 

The author wishes to thank his academic advisors at Uppsala University, Cecilia Holmgren and Svante Janson, for their generous support throughout. The gratitude is extended to Stephan Wagner for the many helpful comments and discussions, and to the anonymous referees for their helpful comments. This work was partially supported by grants from the Knut and Alice Wallenberg Foundation, the Ragnar S\"oderberg Foundation, and the Swedish Research Council. The author has received funding from the European Union's Horizon 2020 research and innovation programme under the Marie Sk\l{}odowska-Curie grant agreement Grant Agreement No 101034253.


\bibliographystyle{alphaurl}
\bibliography{pol2}

\begin{thebibliography}{CCDT13}

\bibitem[ABBH14]{addario2014cutting}
L.~Addario-Berry, N.~Broutin, and C.~Holmgren.
\newblock Cutting down trees with a {Markov} chainsaw.
\newblock {\em Annals of Applied Probability}, 24(6):2297--2339, 2014.

\bibitem[AM09]{AK09}
D.~Andr{\'e}n and K.~Markstr{\"o}m.
\newblock The bivariate {Ising} polynomial of a graph.
\newblock {\em Discrete Applied Mathematics}, 157(11):2515--2524, 2009.

\bibitem[Ber12]{bertoin2012fires}
J.~Bertoin.
\newblock Fires on trees.
\newblock In {\em Annales de l'IHP Probabilit{\'e}s et statistiques},
  volume~48, pages 909--921, 2012.

\bibitem[Bon69]{Bondy69}
J.A. Bondy.
\newblock On {Kelly's} congruence theorem for trees.
\newblock {\em Mathematical Proceedings of the Cambridge Philosophical
  Society}, 65(2):387--397, 1969.

\bibitem[BR00]{BR2000}
B.~Bollob{\'a}s and O.~Riordan.
\newblock Polychromatic polynomials.
\newblock {\em Discrete Mathematics}, 219(1-3):1--7, 2000.

\bibitem[Bur24]{Bur21modification}
F.~Burghart.
\newblock A modification of the random cutting model.
\newblock {\em Advances in Applied Probability}, 56(1):287--314, 2024.

\bibitem[CCDT13]{2013transversals}
V.~Campos, V.~Chvatal, L.~Devroye, and P.~Taslakian.
\newblock Transversals in trees.
\newblock {\em Journal of graph theory}, 73(1):32--43, 2013.

\bibitem[CG91]{CG91Tutte}
S.~Chaudhary and G.~Gordon.
\newblock {Tutte} polynomials for trees.
\newblock {\em Journal of graph theory}, 15(3):317--331, 1991.

\bibitem[Dev11]{devroye2011}
L.~Devroye.
\newblock A note on the probability of cutting a {G}alton-{W}atson tree.
\newblock {\em Electronic Journal of Probability}, 16:2001--2019, 2011.

\bibitem[GM89]{GM89}
G.~Gordon and E.~McMahon.
\newblock A greedoid polynomial which distinguishes rooted arborescences.
\newblock {\em Proceedings of the American Mathematical Society},
  107(2):287--298, 1989.

\bibitem[Gri99]{grimmett1999percolation}
G.~R. Grimmett.
\newblock {\em Percolation}.
\newblock Die Grundlehren der mathematischen Wissenschaften in
  Einzeldarstellungen. Springer, 2nd edition, 1999.

\bibitem[HP66]{HP66}
F.~Harary and E.~Palmer.
\newblock The reconstruction of a tree from its maximal subtrees.
\newblock {\em Canadian Journal of Mathematics}, 18:803--810, 1966.

\bibitem[Jan06]{janson2006random}
S.~Janson.
\newblock Random cutting and records in deterministic and random trees.
\newblock {\em Random Structures \& Algorithms}, 29(2):139--179, 2006.

\bibitem[Kel57]{Kelly57}
P.J. Kelly.
\newblock A congruence theorem for trees.
\newblock {\em Pacific Journal of Mathematics}, 7(1):961--968, 1957.

\bibitem[Law11]{law2011}
H.~Law.
\newblock {\em Trees and graphs: congestion, polynomials and reconstruction}.
\newblock PhD thesis, Oxford University, UK, 2011.

\bibitem[Liu21]{Liu21tree}
P.~Liu.
\newblock A tree distinguishing polynomial.
\newblock {\em Discrete Applied Mathematics}, 288:1--8, 2021.

\bibitem[LW24]{LW22}
N.A. Loehr and G.S. Warrington.
\newblock A rooted variant of {Stanley's} chromatic symmetric function.
\newblock {\em Discrete Mathematics}, 347(3), 2024.

\bibitem[MM70]{MeirMoon1970}
A.~Meir and J.~W. Moon.
\newblock Cutting down random trees.
\newblock {\em Journal of the Australian Mathematical Society},
  11(3):313–324, 1970.

\bibitem[Mor96]{morandi1996field}
P.~Morandi.
\newblock {\em Field and Galois theory}, volume 167 of {\em Graduate Texts in
  Mathematics}.
\newblock Springer, 1996.

\bibitem[Ne{\v{s}}71]{Nesetril71}
J.~Ne{\v{s}}et{\v{r}}il.
\newblock A congruence theorem for asymmetric trees.
\newblock {\em Pacific Journal of Mathematics}, 37(3):771--778, 1971.

\bibitem[NO96]{NO96}
S.~Negami and K.~Ota.
\newblock Polynomial invariants of graphs {II}.
\newblock {\em Graphs and Combinatorics}, 12(2):189--198, 1996.

\bibitem[Pan06]{panholzer2006cutting}
A.~Panholzer.
\newblock Cutting down very simple trees.
\newblock {\em Quaestiones Mathematicae}, 29(2):211--227, 2006.

\bibitem[RMW22]{RW22polynomial}
V.~Razanajatovo~Misanantenaina and S.~Wagner.
\newblock A polynomial associated with rooted trees and specific posets.
\newblock {\em Transactions on Combinatorics}, 11(3):255--279, 2022.

\bibitem[Tut47]{Tutte47}
W.~T. Tutte.
\newblock A ring in graph theory.
\newblock {\em Mathematical proceedings of the Cambridge Philosophical
  Society}, 43:26--40, 1947.

\bibitem[Tut54]{Tutte54}
W.~T. Tutte.
\newblock A contribution to the theory of chromatic polynomials.
\newblock {\em Canadian journal of mathematics}, 6:80--91, 1954.

\bibitem[Ula60]{Ulam60}
S.~Ulam.
\newblock {\em A Collection of Mathematical Problems}, volume~8 of {\em Tracts
  in Pure and Applied Mathematics}.
\newblock Interscience, 1960.

\bibitem[Wag]{WagnerCom}
S.~Wagner.
\newblock Personal communication.

\bibitem[Wag15]{Wag15}
S.~Wagner.
\newblock Central limit theorems for additive tree parameters with small toll
  functions.
\newblock {\em Combinatorics, Probability and Computing}, 24(1):329--353, 2015.

\end{thebibliography}

\end{document}